\documentclass[11pt]{article}

\usepackage{amsmath,amssymb,amsthm}
\usepackage[margin=1in]{geometry}
\usepackage{hyperref}
\usepackage{color}
\usepackage{enumerate}
\usepackage{todonotes}

\title{A weighted cycle-localization inequality}
\author{Jiangdong Ai\thanks{School of Mathematical Sciences and LPMC, Nankai University. {\tt jd@nankai.edu.cn}. Funded by the National Natural Science Foundation of China (No.12522117, No.12401456), the Natural Science Foundation of Tianjin (No.24JCQNJC01960) and Fundamental and Interdisciplinary Disciplines Breakthrough Plan of the Ministry of Education of China (JYB2025XDXM207).}
\hspace{2mm}
Bin Chen\thanks{School of Mathematics and Statistics, Fuzhou University, Fujian, China. Email: {\tt cbfzu03@163.com}. Funded by the National Natural Science Foundation of China (No.12501473, No.12471336).}
\hspace{2mm}
Ming Chen\thanks{School of Mathematics and Statistics, Jiangsu Normal University, Xuzhou, China. Email: {\tt chenming314@jsnu.edu.cn}. Funded by National Key Research and Development Program of China (No.2024YFA1013900), National Natural Science Foundation of China (No.12501483), Basic Research Program of Jiangsu (No.BK20251044), and Natural Science Foundation of the Jiangsu Higher Education Institutions of China (No.25KJB110003).}
\hspace{2mm}  Tianxiao Zhao\thanks{School of Mathematics, Harbin Institute of Technology, Harbin, China. {\tt zhaotianxiao@hit.edu.cn}.  Funded by the National Natural Science Foundation of China (No.12501475).}}
\date{March 2026}

\newtheorem{theorem}{Theorem}
\newtheorem{lemma}[theorem]{Lemma}
\newtheorem{question}{Question}
\newtheorem{proposition}[theorem]{Proposition}
\newtheorem{corollary}[theorem]{Corollary}

\theoremstyle{definition}
\newtheorem{definition}[theorem]{Definition}
\newtheorem{remark}[theorem]{Remark}

\begin{document}
\maketitle

\begin{abstract}

In 1959, Erd\H{o}s and Gallai showed that every $2$-connected graph $G$ contains a cycle of length at least $\frac{2|E(G)|}{|V(G)|-1}$. This result was subsequently extended to weighted graphs by Bondy and Fan in 1991. A natural local variant of this problem arises by considering, for each edge $e\in E(G)$, the quantity $c(e)$, defined as the length of the longest cycle in $G$ containing $e$ (with $c(e)=2$ if $e$ is a bridge). Zhao and Zhang recently proved that for every graph $G$ on $n$ vertices satisfies
$\sum_{e\in E(G)}\frac{1}{c(e)}\le \frac{n-1}{2}.$

In this note, we establish a weighted generalization of this inequality. For a weighted graph $(G,w)$ with positive edge weights, let $C_w(e)$ denote the maximum weight of a cycle containing $e$ (setting $C_w(e)=2w(e)$ if $e$ is a bridge). We prove that
$$
\sum_{e\in E(G)}\frac{w(e)}{C_w(e)}\le \frac{n-1}{2}.
$$
Our result can be viewed as a weighted local analogue of the Bondy-Fan theorem, thereby establishing a correspondence between the global and local perspectives. Furthermore, we present a broad class of graphs attaining equality and derive necessary conditions for equality.




\end{abstract}

\section{Introduction}
Determining the length of the longest cycle is one of the most important lines of research in graph theory.
From a global perspective (i.e., the number of edges), Erd\H{o}s and Gallai~\cite{EG} obtained the following result.
\begin{theorem}[\cite{EG}]\label{eg}
    Every $2$-edge-connected  graph with $n$ vertices and $m$ edges contains a cycle of length at least $\frac{2m}{n-1}$.
\end{theorem}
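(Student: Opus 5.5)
We prove the statement of Theorem~\ref{eg} by induction on $n$, in the stronger form suited to induction: for every $2$-edge-connected graph $G$ on $n$ vertices with $m$ edges, $m\le \frac{c(n-1)}{2}$, where $c$ is the circumference of $G$. The plan is to reduce to the $2$-connected case by splitting at cut vertices, and then to run the classical longest-path argument on a $2$-connected graph.

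\textbf{Step 1: reduction to the $2$-connected case.} If $G$ has a cut vertex, write $G$ as the union of two edge-disjoint $2$-edge-connected subgraphs $G_1,G_2$ sharing exactly one vertex. Here $n_1+n_2=n+1$, $m_1+m_2=m$, and the circumferences satisfy $c_i\le c$. Induction gives $m_i\le c_i(n_i-1)/2$, and summing yields
\[
m\le \frac{c\,(n_1-1+n_2-1)}{2}=\frac{c\,(n-1)}{2}.
\]
This leaves the case where $G$ is $2$-connected. The base case, a single cycle, gives equality $n=m=c$ with $c\ge 3$, and $n\le c(n-1)/2$ holds there.

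\textbf{Step 2: longest-path argument.} Suppose $G$ is $2$-connected with circumference $c$. Take a longest path $P=v_1\cdots v_k$. By maximality, every neighbour of $v_1$ and of $v_k$ lies on $P$. If $v_1$ is adjacent to $v_i$ with $i$ as large as possible, then $v_1\cdots v_iv_1$ is a cycle. Hence $\deg(v_1)\le i-1\le c-1$, and the same bound holds for $v_k$.

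The target is a vertex $v$ of degree at most $c/2$ such that $G-v$ is still $2$-edge-connected, or at least decomposes into $2$-edge-connected blocks. Deleting $v$ then removes at most $c/2$ edges, and the induction gives
\[
m\le \frac{c}{2}+\frac{c(n-2)}{2}=\frac{c(n-1)}{2}.
\]

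\textbf{Step 3: the main obstacle.} The bound $\deg\le c-1$ from Step~2 is not enough; we need $\deg\le c/2$. I would first try the P\'osa/Dirac crossing argument. If $v_1\sim v_{i+1}$ and $v_k\sim v_i$, then $v_1\cdots v_iv_kv_{k-1}\cdots v_{i+1}v_1$ is a cycle on all of $V(P)$. Combining this with $2$-connectedness, as in Dirac's theorem, yields a cycle of length at least $\min(k,\deg v_1+\deg v_k)$.

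If both endpoints have degree greater than $c/2$, this produces a cycle longer than $c$ whenever $k>c$, a contradiction. If $k\le c$, then instead apply Kopylov-type edge counting: each vertex on the longest path has limited adjacency off $P$, which controls the edge count directly.

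The remaining cases are bookkeeping. When $G-v$ loses $2$-edge-connectivity, split it into blocks and re-apply Step~1; single edges hanging off must be absorbed into the counts for $v$. Handling this degenerate splitting cleanly is where I expect the most care. A cleaner alternative I would try in parallel is the Bondy--Fan style argument: take a longest path in each block and average over $v_1$ and $v_k$.
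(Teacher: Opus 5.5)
The paper gives no proof of this statement. It is quoted from Erd\H{o}s--Gallai as background, and inside the paper it is recovered only as the unit-weight case of the Bondy--Fan theorem (Theorem~\ref{bf}), or from the local inequality via \eqref{eq2}; both rest on Bondy--Fan as a black box.

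Your route is different: the classical self-contained one. You reduce to the $2$-connected case through blocks, then delete a low-degree end of a longest path and induct. It is correct. Compared with deducing the statement from the harder weighted theorem, it gives a first-principles proof. In fact it proves $m\le \max\{c,2\}(n-1)/2$ for every connected graph.

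Everything hinges on the lemma you invoke in Step~3, due to Bondy and to Kopylov. In a $2$-connected graph, a path on $k$ vertices that contains all neighbours of both its ends $v_1,v_k$ forces a cycle of length at least $\min\{k,d(v_1)+d(v_k)\}$. You should cite this lemma explicitly rather than attribute it to the crossing argument. That argument only handles $d(v_1)+d(v_k)\ge k$. The regime you actually need, $c<d(v_1)+d(v_k)<k$, is exactly where $2$-connectivity does the work.

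The two spots you flag as delicate are in fact immediate.

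\textbf{The case $k\le c$.} A longest cycle contains a path on $c$ vertices, so $k\ge c$ always. If $n>c$, connectivity gives an edge $ux$ with $u$ on a longest cycle $C$ and $x\notin V(C)$. Start at the successor of $u$ on $C$, go around $C$ to $u$, and step to $x$; this is a path on $c+1$ vertices. Hence $k\le c$ forces $n=c$, and then $m\le\binom{n}{2}=c(n-1)/2$. No Kopylov-type counting is needed.

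\textbf{The bridge bookkeeping.} When $k>c$, let $v$ be an end with $d(v)\le c/2$. Then $G-v$ is connected because $G$ is $2$-connected. Split $G-v$ into its set $B'$ of bridges and its $2$-edge-connected pieces $H_j$ on $n_j$ vertices.
\begin{itemize}
\item Each bridge contributes $1\le c/2$, since $c\ge 3$.
\item Each $H_j$ contributes at most $c(n_j-1)/2$ by induction.
\item The vertex count is $\sum_j(n_j-1)+|B'|=n-2$, exactly the count at the end of the proof of Theorem~\ref{thm:main}.
\end{itemize}
Thus $m\le c/2+c(n-2)/2=c(n-1)/2$. Nothing needs to be absorbed into the count for $v$.
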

Let $G$ be a weighted graph if each of edge $e\in E(G)$ is assigned a nonnegative number $\phi(e)$.
When $\phi(e)=1$ for each $e\in E(G)$, $G$ is a simple graph.
For any subgraph $H\subseteq G$, the \emph{weight} of $H$ is defined by $$\phi(H)=\sum\limits_{e\in E(H)}\phi(e).$$
In 1991, Bondy and Fan \cite{BF} resolved a conjecture from \cite{BF1}, thereby generalizing Theorem \ref{eg} to weighted graphs.
Concretely, their result reads as follows.
\begin{theorem}[\cite{BF}]\label{bf}
    Let $G$ be a 2-edge-connected weighted graph on $n$ vertices. Then $G$ contains a cycle of weight at least $\frac{2\phi(G)}{n-1}$.
\end{theorem}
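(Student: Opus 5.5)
We first reduce to the $2$-connected case. If $G$ is $2$-edge-connected but not $2$-connected, we work with its blocks $B_1,\dots,B_t$, where $B_i$ has $n_i$ vertices. Since $G$ is connected, the block--cutvertex tree gives $\sum_i (n_i-1)=n-1$. Every edge lies in exactly one block, so $\sum_i \phi(B_i)=\phi(G)$. By averaging, some block satisfies
$$\frac{\phi(B_i)}{n_i-1}\ge \frac{\phi(G)}{n-1}.$$
No block is a single edge, because that edge would be a bridge. Hence that block is $2$-connected, and a heavy cycle in it is also a cycle of $G$. From now on we assume $G$ is $2$-connected, and we induct on $n$. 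The base case is a cycle, where the bound $\phi(C)\ge 2\phi(G)/(n-1)$ is trivial for $n\ge 3$.

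For the $2$-connected case we prove a stronger, path-type statement that is amenable to induction, in the spirit of the Erd\H{o}s--Gallai path lemma behind Theorem~\ref{eg}: \emph{for every $2$-connected weighted graph $G$ on $n$ vertices and any two distinct vertices $x,y$, there is an $x$--$y$ path of weight at least $\phi(G)/(n-1)$.} We also keep the cycle statement as part of the same induction.

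The inductive step uses an ear decomposition. We write $G=G'\cup P$, where $P$ is the last ear, with ends $a\neq b$ in $G'$ and $k\ge 0$ internal vertices, and $G'$ is $2$-connected on $n-k$ vertices. We choose the ear so that the ratio $\phi(P)/(k+1)$ is \emph{minimal} among all possible last ears. If $\phi(G')/(n-k-1)\ge \phi(G)/(n-1)$, the induction hypothesis applied to $G'$ gives the desired cycle and paths. Otherwise we have
$$\phi(P)>(k+1)\,\frac{\phi(G)}{n-1}.$$
In that case we close $P$ with an $a$--$b$ path $Q$ in $G'$ supplied by the path statement. To reach $2\phi(G)/(n-1)$, we need $\phi(Q)$ to make up the remaining deficit. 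The minimal-ratio choice of the ear is what should provide this: every other ear that could be peeled off is at least as heavy per vertex, so $G'$ cannot be too light near $a$ and $b$. For the path statement itself we argue similarly. We route from $x$ to $y$ either inside $G'$, or through $P$ using Menger-type fans from $x,y$ to $\{a,b\}$ inside $G'$, and we average over the two ways of traversing $P$.

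The main obstacle is this balancing step. The naive induction loses a factor because the bound for $G'$ is measured against $n-k-1$ rather than $n-1$. The heavy ear $P$ and the path $Q$ in $G'$ must therefore be combined so that their contributions add up to exactly $2\phi(G)/(n-1)$. If the ear-decomposition averaging does not close, the first fallback is Bondy and Fan's own device. That device is a weighted Pósa-type argument: take a heaviest path $v_1\cdots v_p$ in $G$ (after the block reduction), and for each neighbour $v_i$ of $v_1$ consider the cycle $v_1v_2\cdots v_iv_1$ and its rotations. One then shows that the total edge weight incident to the endpoints is at most $\frac{p-1}{2}$ times the heaviest cycle weight. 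Deleting an endpoint and inducting then gives the bound; the key inequality to verify is the weighted version of the crossing-pair count.
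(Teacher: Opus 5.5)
The paper gives no proof of this statement: it is quoted from Bondy and Fan and used as a black box. Your proposal therefore has to stand on its own, and it has a genuine gap exactly at the step you call the main obstacle.

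\paragraph{The ear induction.} First a correction. Negating $\phi(G')/(n-k-1)\ge\phi(G)/(n-1)$ gives $\phi(P)>k\,\phi(G)/(n-1)$, not $(k+1)\phi(G)/(n-1)$. With the correct inequality, ears with $k\ge 1$ internal vertices cause no trouble, and no minimal-ratio choice is needed, provided your path lemma holds for the smaller graph $G'$. Indeed, since $n-k\ge 3$, the guaranteed weight $\phi(P)+\frac{\phi(G)-\phi(P)}{n-k-1}$ of $P\cup Q$ is increasing in $\phi(P)$. Hence it is at least $\frac{(k+1)\phi(G)}{n-1}\ge\frac{2\phi(G)}{n-1}$.

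The whole difficulty is chords ($k=0$):
\begin{itemize}
\item The inequality on $\phi(P)$ is then vacuous.
\item $G'=G-ab$ has the same number of vertices, so induction on $n$ says nothing about it.
\item The best you can guarantee is $\phi(ab)+\frac{\phi(G)-\phi(ab)}{n-1}$, which reaches $\frac{2\phi(G)}{n-1}$ only if $\phi(ab)\ge\frac{\phi(G)}{n-2}$.
\end{itemize}
Chords cannot be avoided. The internal vertices of a last ear have degree $2$ in $G$, so whenever $\delta(G)\ge 3$ every ear you can peel off is a chord. Your minimal-ratio rule then just selects the lightest removable chord, which is the opposite of what is needed. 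Already for $K_n$ with all weights $1$ ($n\ge 4$), the scheme guarantees only $1+\frac{\binom{n}{2}-1}{n-1}<\frac{n}{2}+1$, against the target $n$. The claim that $G'$ ``cannot be too light near $a$ and $b$'' gives no lower bound on $\phi(Q)$ beyond the path lemma. The path lemma itself is only gestured at, and its own induction runs into the same chord case.

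\paragraph{The fallback.} This does not fill the gap; it moves it into ``the key inequality to verify''. Let $c^*$ be the maximum cycle weight and $d^w(v)$ the weighted degree. A deletion induction needs a vertex $v$ with $d^w(v)\le c^*/2$. Then $\phi(G-v)\ge\phi(G)-c^*/2$, and the bound for $G-v$ gives $(n-1)c^*\ge 2\phi(G)$. It also needs control over $G-v$, which may acquire bridges of weight up to $c^*$. The statement is false in the presence of heavy bridges: a unit-weight triangle with a pendant edge of weight $2$ already violates it.

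But in uniformly weighted $K_n$ ($n\ge 3$), which is extremal, every vertex has $d^w(v)=n-1>\frac{n}{2}=\frac{c^*}{2}$. So no endpoint of a heaviest path can be deleted. Any P\'osa-type argument must therefore carry a second alternative: the analogue of the ``$c\ge p$'' case in the unweighted endpoint bound $c\ge\min\{p,\,d(v_1)+d(v_p)\}$. In the weighted setting a long cycle is not automatically heavy, so that alternative is where the real work lies. Moreover, rotating a heaviest path changes its weight. The rotated paths need not be heaviest, and the crossing count does not transfer verbatim.

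Your block reduction and base case are fine. The content of the theorem, however, lies in the dense case, which the proposal leaves open. Even for $K_n$ with arbitrary weights, the bound comes from a global averaging, such as the fractional Hamilton decomposition of Lemma~\ref{lem:hamdecomp}, not from peeling edges one at a time.
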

In the same paper, Bondy and Fan \cite{BF} further characterized the 2-edge-connected weighted graphs on $n$ vertices that contain no cycle of weight more than $\frac{2\phi(G)}{n-1}$.

The aforementioned two results suggest that a suitably chosen global condition can yield a good lower bound on the length of the longest cycle in graphs.
A natural question then arises:
\begin{center}
    \emph{Is there a local condition that yields a good lower bound on the length of the longest cycle in graphs?}
\end{center}
Recently, Zhao and Zhang \cite{ZZ} introduced a \emph{localized} perspective for long cycles.
For a simple graph $G$ and any $e\in E(G)$, let $c(e)$ be the length of a longest cycle containing $e$ (and set $c(e)=2$ if $e$ is a bridge).
They provided the following result.
\begin{theorem}[\cite{ZZ}]\label{ZZ}
    Let $G$ be a graph on $n$ vertices. Then 
    \begin{equation}\label{eq:ZZ}
\sum_{e\in E(G)}\frac{1}{c(e)}\le \frac{n-1}{2}, 
\end{equation}
with equality if and only if $G$ is connected, and each block of $G$ is a clique.
\end{theorem}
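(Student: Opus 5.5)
The plan is to reduce to $2$-connected graphs, prove the inequality there by induction on the number of vertices, and then treat the equality case separately.

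\textbf{Reduction to blocks.} Every cycle of $G$ lies inside a single block, so $c(e)$ depends only on the block containing $e$. Let $B$ range over the blocks of $G$ and let $n_B=|V(B)|$. The standard block--cutvertex count gives
\[
\sum_B (n_B-1)=n-\omega(G)\le n-1,
\]
where $\omega(G)$ is the number of components, with equality iff $G$ is connected. A bridge is a block with $n_B=2$ and contributes exactly $\tfrac12=\tfrac{n_B-1}{2}$. It therefore suffices to prove the following: \emph{for every $2$-connected graph $H$ on $k$ vertices, $\sum_{e\in E(H)}1/c_H(e)\le \frac{k-1}{2}$, with equality iff $H=K_k$.} Summing over blocks then gives \eqref{eq:ZZ}. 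Equality forces $\omega(G)=1$ and every $2$-connected block to be complete, and bridges are $K_2$. Conversely, if every block is a clique, then each edge of a block $K_k$ lies on a Hamiltonian cycle of that block, so $c(e)=k$. The block then contributes $\binom{k}{2}/k=\frac{k-1}{2}$, and equality holds when $G$ is connected.

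\textbf{The $2$-connected case by induction.} I would induct on $k$. For $k=3$ the claim is checked directly. For the step I would delete a suitable vertex $v$ and argue as follows. If $H-v$ is $2$-connected, every cycle of $H-v$ is a cycle of $H$, so $c_{H-v}(e)\le c_H(e)$ for every $e\in E(H-v)$. The induction hypothesis gives
\[
\sum_{e\in E(H-v)}\frac1{c_H(e)}\le \sum_{e\in E(H-v)}\frac1{c_{H-v}(e)}\le\frac{k-2}{2}.
\]
If $H-v$ is not $2$-connected, I would apply the block reduction above to $H-v$ instead; it is connected, so the same bound $\frac{k-2}{2}$ holds. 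It then remains to find a vertex $v$ with the local bound
\[
\sum_{e\ni v}\frac{1}{c_H(e)}\le \frac12. \tag{$\ast$}
\]

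\textbf{The main obstacle: establishing $(\ast)$.} Since $v$ has $d(v)$ incident edges, $(\ast)$ would follow if every edge at $v$ lies on a cycle of length at least $2d(v)$. My first attempt would be an endpoint $v$ of a longest path $P=vx_1\cdots x_\ell$ with $N(v)=\{x_{i_1},\dots,x_{i_d}\}$. For each $j$, the edge $vx_{i_j}$ lies on the two cycles $vx_1\cdots x_{i_j}v$ and $vx_{i_j}x_{i_j+1}\cdots x_{i_d}v$. This gives $c(vx_{i_j})\ge \max(i_j+1,\,i_d-i_j+2)$, which is only about $(d+3)/2$ in the worst case. That falls short of $2d$, so the naive argument fails. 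To close the gap I would combine both endpoints of $P$ with P\'osa-type rotations, using $2$-connectivity to route a path back from $x_{i_d}$ past $x_{i_j}$. Alternatively, I would replace single-vertex deletion by contraction of an edge lying on a longest cycle, using the Erd\H{o}s--Gallai lemma that in a $2$-connected graph any two vertices are joined by a long path. If neither works, I would fall back on a weighted averaging over all vertices: show that $\sum_v\sum_{e\ni v}1/c(e)\le k/2$ with a vertex attaining the minimum. This also gives the equality analysis, since $(\ast)$ is tight exactly when all edges at $v$ lie on Hamiltonian cycles and $d(v)=k-1$, which forces $H=K_k$ through the induction.
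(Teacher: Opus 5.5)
\textbf{Gap: the local bound $(\ast)$ is false, precisely for the extremal graphs.} Your block reduction is fine, and so is the ``if'' direction of the equality case; it matches Proposition~\ref{prop:block} with $w\equiv 1$. The problem is $(\ast)$, which is not just unproven but false for the graphs where equality holds. In $K_k$ every edge lies on a Hamilton cycle, so $c(e)=k$. Hence every vertex has $\sum_{e\ni v}1/c(e)=\frac{k-1}{k}>\frac12$. Since $K_k-v=K_{k-1}$ is $2$-connected, your induction step has no admissible vertex already at $k=4$. More generally, $c(e)\le k$ means every vertex of a $2$-connected graph with minimum degree $>k/2$ violates $(\ast)$.

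The loss happens in the step $\sum_{e\in E(H-v)}1/c_H(e)\le\sum_{e\in E(H-v)}1/c_{H-v}(e)$. For $K_k$ the slack discarded there is $\binom{k-1}{2}\bigl(\frac1{k-1}-\frac1k\bigr)=\frac{k-2}{2k}$, which is exactly the excess $\frac{k-1}{k}-\frac12$ of the local sum. So a vertex-deletion induction must measure how much the cycles of $H-v$ lengthen when $v$ is put back. A longest-path or P\'osa-rotation argument at $v$ alone cannot see this.

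Your fallback also fails. $\sum_v\sum_{e\ni v}1/c(e)=2\sum_e 1/c(e)$, which equals $k-1>k/2$ for $K_k$. Your stated tightness condition for $(\ast)$ ($d(v)=k-1$, all edges at $v$ on Hamilton cycles) gives local sum $\frac{k-1}{k}$, not $\frac12$.

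\textbf{The missing idea.} The paper (following Li--Ning; this is Theorem~\ref{thm:main} with $w\equiv 1$) treats $\phi(e)=1/c(e)$ as an edge weighting. If $C$ is a cycle and $e\in E(C)$, then $c(e)\ge |C|$, so $\phi(C)\le 1$ for every cycle. Apply the Bondy--Fan theorem to each block $H$ with $n_H\ge 3$; cycles through its edges stay inside $H$, so $c$ is unchanged. This gives $2\phi(H)/(n_H-1)\le\max_C\phi(C)\le 1$, i.e.\ $\phi(H)\le\frac{n_H-1}{2}$. Your block count then finishes the inequality, with no induction and no choice of vertex.

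\textbf{The ``only if'' direction still needs an argument.} The argument above only shows that each block is Bondy--Fan-extremal for $\phi$ and contains a cycle $C$ with $c(e)=|C|$ for all $e\in E(C)$ (Proposition~\ref{prop:nec}). Deducing that every block is complete requires a further argument. The paper takes this from Zhao--Zhang, and Bondy--Fan's characterization of their extremal graphs would be the natural tool. Your proposal supplies no such argument, since its equality analysis also rested on $(\ast)$.
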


Here, we want to remark that Theorem \ref{ZZ} can be viewed as a generalization of Theorem \ref{eg} as illustrated by the following inequalities: \begin{equation}\label{eq2}
\sum_{e\in E(G)}\frac{1}{L}\leq\sum_{e\in E(G)}\frac{1}{c(e)}\le \frac{n-1}{2}
\Longrightarrow{\frac{e(G)}{L}\leq \frac{n-1}{2}}\Longrightarrow\frac{2e(G)}{n-1}\leq L,
\end{equation}
where $L$ is denoted to be the length of the longest cycle in $G$.

Very recently, Li and Ning~\cite{LN} observed that \eqref{eq:ZZ} admits an elegant proof by applying Theorem \ref{bf} to an \emph{auxiliary} weighting of the form $e\mapsto \frac{1}{c(e)}$.
This is already a useful viewpoint, but it remains essentially unweighted: the input graph has no edge-weights, and the weighting is introduced only as a proof device.

Inspired by Theorems \ref{eg} and \ref{bf}, it is natural to ask the following question.
\begin{question}
    Can we generalize Theorem \ref{ZZ} to weighted graphs?
\end{question}
In our view, the key to this question is how to choose a suitable local parameter for each edge, and such a choice is far from obvious.
A natural requirement of this parameter is that the inequality should depend on the given weights $w$ and should localize \emph{the heaviest cycle through each edge}.
This leads to the parameter $C_w(e)$, which means the maximum weight of a cycle containing $e$.
The point is that $C_w(e)$ is defined using the same weighting $w$ that we want to control, and it varies edge-by-edge in a highly non-linear way.

The central observation of this note is that the self-normalization
$$
\phi(e):=\frac{w(e)}{C_w(e)}
$$
forces \emph{every} cycle $C$ to satisfy $\phi(C)\le 1$, despite the denominators $C_w(e)$ depending on global maximal.
This makes the weighted problem tractable and produces a sharp analogue of Theorem \ref{ZZ}.
In particular, the weighted equality theory is strictly richer than the unweighted one: even on $K_n$, equality can hold for many non-uniform weightings (see Proposition~\ref{prop:induced}).
At the same time, equality is not ``free'': on $K_r$ with $r\ge 4$ we show that equality forces the original weights themselves to be vertex-induced (see Proposition~\ref{prop:Kchar}).

Before stating our main result, we formally introduce several necessary definitions.
Throughout, $G$ is a finite simple graph on $n$ vertices and $w:E(G)\to \mathbb{R}_{>0}$.
For a subgraph $H\subseteq G$, write $w(H)=\sum_{e\in E(H)} w(e)$.

\begin{definition}
For any $e\in E(G)$, let
$$
C_w(e):=
\begin{cases}
\max\{w(C):\ C\subseteq G\text{ is a cycle and }e\in E(C)\}, & \text{if $e$ lies on a cycle},\\[2pt]
2w(e), & \text{if $e$ is a bridge}.
\end{cases}
$$
\end{definition}
Note that $C_w(e)$ generalizes $c(e)$ from \cite{ZZ} by Zhao and Zhang in the sense that the latter corresponds to the special case where $w\equiv 1$.


\begin{definition}\label{def:induced}
Let $K_r$ be a clique on vertex set $U$.
An edge-weighting $w$ on $K_r$ is \emph{vertex-induced} if there exists a function $a:U\to \mathbb{R}_{\ge 0}$ such that
\begin{equation}\label{eq:induced}
w(uv)=\frac{a(u)+a(v)}{2}\qquad\text{for all }uv\in E(K_r).
\end{equation}
In this case, every cycle $C\subseteq K_r$ satisfies $w(C)=\sum_{v\in V(C)} a(v)$.
\end{definition}

\section{Main theorem}

Equipped with the former definitions, we are ready to state our main result.
As mentioned previously, Theorem~\ref{thm:main} is a generalization of Theorem \ref{ZZ} by taking $w\equiv 1$.

\begin{theorem}\label{thm:main}
Let $(G,w)$ be a connected weighted graph on $n$ vertices, with $C_w(e)$ defined as above.
Then
\begin{equation}\label{eq:main}
\sum_{e\in E(G)}\frac{w(e)}{C_w(e)}\le \frac{n-1}{2}.
\end{equation}
\end{theorem}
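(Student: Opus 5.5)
The plan is to apply Theorem~\ref{bf} to the auxiliary weighting $\phi(e):=\frac{w(e)}{C_w(e)}$, in the spirit of Li and Ning's argument for \eqref{eq:ZZ}. Everything rests on one observation. If $C$ is any cycle of $G$, then every edge $e\in E(C)$ satisfies $C_w(e)\ge w(C)$, because $C$ is one of the cycles over which the maximum defining $C_w(e)$ is taken. Consequently
$$
\phi(C)=\sum_{e\in E(C)}\frac{w(e)}{C_w(e)}\le \sum_{e\in E(C)}\frac{w(e)}{w(C)}=1 .
$$
So under $\phi$ every cycle has weight at most $1$. Moreover, a bridge $e$ has $\phi(e)=\frac{w(e)}{2w(e)}=\frac12$.

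Next I reduce to $2$-edge-connected pieces. Deleting all bridges of the connected graph $G$ leaves components $B_1,\dots,B_k$, the $2$-edge-connected components. Their number of vertices is $n_1+\dots+n_k=n$, and there are exactly $k-1$ bridges, since contracting each $B_i$ produces a tree. Every non-bridge edge lies inside some $B_i$.

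I also need that for $e\in E(B_i)$, the value $C_w(e)$ is the same whether computed in $G$ or in $B_i$. This holds because a cycle never uses a bridge, and a cycle through $e$ stays within the $2$-edge-connected component containing $e$. Hence $\phi$ restricted to $B_i$ is the self-normalized weighting of $(B_i,w)$, and every cycle of $B_i$ still has $\phi$-weight at most $1$.

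For each $B_i$ with $n_i\ge 2$, which is $2$-edge-connected, Theorem~\ref{bf} gives a cycle of $\phi$-weight at least $\frac{2\phi(B_i)}{n_i-1}$. Combined with the bound $\phi(C)\le 1$, this yields $\phi(B_i)\le \frac{n_i-1}{2}$. This bound also holds trivially when $n_i=1$, since then $B_i$ has no edges. Summing over the components and the bridges gives
$$
\sum_{e\in E(G)}\frac{w(e)}{C_w(e)}\le \sum_{i=1}^k\frac{n_i-1}{2}+\frac{k-1}{2}=\frac{n-k}{2}+\frac{k-1}{2}=\frac{n-1}{2}.
$$

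The main obstacle is making sure Theorem~\ref{bf} applies with the weighting $\phi$. Its weights are positive reals, which is allowed since the theorem only requires nonnegative weights. The theorem is stated for $2$-edge-connected graphs, and that is exactly why I decompose by bridges rather than by blocks. The remaining work is the bookkeeping above: the count of $k-1$ bridges and the fact that $C_w$ is locally determined within each component.
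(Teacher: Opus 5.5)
Your proposal is correct and is essentially the paper's own proof. Both arguments use the self-normalized weighting $\phi(e)=w(e)/C_w(e)$ to bound every cycle's $\phi$-weight by $1$, apply Theorem~\ref{bf} to each component of $G-B$, and add back the $k-1$ bridges at $\tfrac12$ each to reach $\frac{n-k}{2}+\frac{k-1}{2}=\frac{n-1}{2}$. Your extra check that $C_w(e)$ is determined inside $B_i$ is valid but not needed, since every cycle of $B_i$ is already a cycle of $G$ and so has $\phi$-weight at most $1$ with $\phi$ defined globally.
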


\begin{proof}
Define an auxiliary weight function $\phi:E(G)\to \mathbb{R}_{>0}$ by
$$
\phi(e):=\frac{w(e)}{C_w(e)}.
$$
If $C$ is a cycle, then $C_w(e)\ge w(C)$ for each $e\in E(C)$, so
\begin{equation}\label{eq:cyclephi}
\phi(C)=\sum_{e\in E(C)}\frac{w(e)}{C_w(e)}\le \sum_{e\in E(C)}\frac{w(e)}{w(C)}=1.
\end{equation}

Let $B$ be the set of bridges of $G$ and let $H_1,\dots,H_k$ be the components of $G-B$.
Then each $H_i$ is $2$-edge-connected (or a single isolated vertex).
For any bridge $b\in B$, $\phi(b)=\frac{w(b)}{2w(b)}=\frac{1}{2}$.

If $H_i$ is an isolated vertex, then $E(H_i)=\emptyset$ and $\phi(H_i)=0=\frac{n_i-1}{2}$, so the bound holds trivially. We henceforth assume $n_i\ge 2$. Fix a component $H_i$, since every cycle in $H_i$ has $\phi$-weight at most $1$ by \eqref{eq:cyclephi}, Theorem \ref{bf} applied to $(H_i,\phi)$ gives
$$
2\phi(H_i)/(n_i-1)\ \le\ \max\{\phi(C):\ C\subseteq H_i\text{ a cycle}\}\ \le\ 1,
$$
so $\phi(H_i)\le (n_i-1)/2$.

Summing over $i$ and adding the bridges yields
$$
\sum_{e\in E(G)}\frac{w(e)}{C_w(e)}
=\sum_{i=1}^k \phi(H_i)+\sum_{b\in B}\phi(b)
\le \sum_{i=1}^k \frac{n_i-1}{2}+\frac{|B|}{2}.
$$
Finally, removing all bridges increases the number of components by $|B|$, so $\sum_{i=1}^k (n_i-1)=n-k$ and $k=|B|+1$, giving
$$
\sum_{i=1}^k (n_i-1)+|B|=n-1,
$$
and hence the right-hand side equals $\frac{n-1}{2}$.
\end{proof}


\section{Extremals and equality}

\subsection{Explicit equality families}

Theorem~\ref{thm:main} is sharp.
Beyond trees and uniform complete graphs, equality holds for a broad class of ``trees of cliques'' with natural weight patterns.

\begin{definition}
A connected graph $G$ is a \emph{block graph} if every \emph{block} of $G$ (that is, every maximal connected subgraph with no cut-vertex) is a clique.
Equivalently, any two maximal cliques intersect in at most one vertex, and every cycle of $G$ is contained in a unique clique.
\end{definition}

\begin{proposition}\label{prop:induced}
Let $K_r$ be a complete graph with $r\ge 4$ and let $w$ be a vertex-induced weighting in the sense of Definition~\ref{def:induced}.
Then $(K_r,w)$ attains equality in \eqref{eq:main}.
\end{proposition}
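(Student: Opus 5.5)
The plan is to show that for a vertex-induced weighting every edge has the same local parameter, namely $C_w(e)=A:=\sum_{v\in U}a(v)$. Once that is established, the left-hand side of \eqref{eq:main} reduces to $\frac{1}{A}\,w(K_r)$, and a direct count of $w(K_r)$ gives exactly $\frac{r-1}{2}$.

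\emph{Step 1 (identifying $C_w(e)$).} By Definition~\ref{def:induced}, every cycle $C\subseteq K_r$ satisfies $w(C)=\sum_{v\in V(C)}a(v)$. This holds because each vertex of $C$ lies on exactly two edges of $C$, each contributing $a(v)/2$. Since $a\ge 0$, we get $w(C)\le \sum_{v\in U}a(v)=A$ for every cycle $C$. Conversely, fix an edge $e=uv$. Because $r\ge 3$, there is a Hamiltonian cycle of $K_r$ through $e$: order the remaining vertices arbitrarily between $v$ and $u$. This cycle has weight exactly $A$. Also, no edge of $K_r$ is a bridge. Hence $C_w(e)=A$ for every $e\in E(K_r)$. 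We must also check that $A>0$ so the quotient makes sense. This follows because $w$ is positive: any edge $uv$ has $a(u)+a(v)=2w(uv)>0$.

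\emph{Step 2 (summing).} Now compute
\[
\sum_{e\in E(K_r)}\frac{w(e)}{C_w(e)}=\frac{1}{A}\sum_{uv\in E(K_r)}\frac{a(u)+a(v)}{2}=\frac{1}{A}\cdot\frac{(r-1)}{2}\sum_{v\in U}a(v)=\frac{r-1}{2}.
\]
Here we use that each vertex lies on exactly $r-1$ edges of $K_r$, so $a(v)$ is counted $(r-1)$ times with coefficient $\frac12$. Since $n=r$, this is equality in \eqref{eq:main}.

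There is no serious obstacle in this argument. The only point needing care is Step 1: the maximum cycle weight through $e$ is attained by a Hamiltonian cycle. This relies on the nonnegativity of $a$, since adding vertices never decreases the cycle weight. It also relies on the existence of a Hamiltonian cycle through any prescribed edge, which holds for $r\ge 3$ and so in particular for $r\ge4$ as assumed.
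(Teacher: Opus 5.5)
Your proof is correct and follows the paper's argument. You show that every edge lies on a Hamilton cycle of weight $A=\sum_{v}a(v)$, which is the maximum cycle weight because $a\ge 0$, so $C_w(e)=A$ for every edge; then the count $w(K_r)=\frac{r-1}{2}A$ gives equality, and your explicit check that $A>0$ follows from the positivity of $w$, a detail the paper leaves implicit.
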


\begin{proof}
Let $a:V(K_r)\to \mathbb{R}_{\ge 0}$ satisfy \eqref{eq:induced}.
For any cycle $C$, $w(C)=\sum_{v\in V(C)} a(v)$, so the maximum cycle weight in $K_r$ equals $A:=\sum_{v\in V(K_r)} a(v)$ and is attained by every Hamilton cycle.
Since every edge lies on a Hamilton cycle, $C_w(e)=A$ for all $e\in E(K_r)$.
Therefore
$$
\sum_{e\in E(K_r)} \frac{w(e)}{C_w(e)}=\frac{w(K_r)}{A}.
$$
On the other hand,
$$
w(K_r)=\sum_{uv\in E(K_r)}\frac{a(u)+a(v)}{2}
=\frac12\sum_{v\in V(K_r)} (r-1)a(v)
=\frac{r-1}{2}\,A.
$$
Hence the ratio equals $\frac{r-1}{2}$, which is the right-hand side of \eqref{eq:main} for $n=r$.
\end{proof}

\begin{proposition}\label{prop:block}
Let $G$ be a block graph on $n$ vertices.
Assume that every clique block of size $r\ge 4$ carries a vertex-induced weighting (with respect to its own vertex weights; triangle blocks may have arbitrary positive weights).
Then $(G,w)$ attains equality in \eqref{eq:main}.
\end{proposition}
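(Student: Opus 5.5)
The plan is to localize everything to blocks. First I will show that $C_w(e)$ depends only on the block containing $e$. Every cycle of $G$ lies inside a single block, so for an edge $e$ of a clique block $K$ the maximum weight of a cycle through $e$ in $G$ equals the maximum weight of a cycle through $e$ in $(K,w|_K)$. A block that is a single edge is a bridge, and for it $w(e)/C_w(e)=\tfrac12$ by definition. Hence
$$
\sum_{e\in E(G)}\frac{w(e)}{C_w(e)}=\sum_{K}\ \sum_{e\in E(K)}\frac{w(e)}{C^{K}_w(e)},
$$
where $K$ ranges over the blocks and $C^K_w$ is computed inside $K$.

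Next I will show that each block $K$ on $r$ vertices contributes exactly $\frac{r-1}{2}$. There are three cases.
\begin{enumerate}[(i)]
\item For $r=2$ (a bridge) the contribution is $\tfrac12$.
\item For $r\ge 4$ with a vertex-induced weighting, Proposition~\ref{prop:induced} gives the contribution $\frac{r-1}{2}$ directly.
\item For $r=3$ with arbitrary positive weights, the only cycle is the triangle $T$ itself, so $C_w(e)=w(T)$ for all three edges. The contribution is therefore $w(T)/w(T)=1=\frac{3-1}{2}$.
\end{enumerate}

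Finally I sum over the blocks. Since $G$ is connected, its block-cutvertex structure is a tree. This gives the standard identity $\sum_K(|V(K)|-1)=n-1$, which I will prove by induction on the number of blocks by removing a leaf block: removing it deletes $|V(K)|-1$ vertices and leaves a connected block graph. Combining this with the per-block values gives total $\frac{n-1}{2}$, which is equality in \eqref{eq:main}.

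The only point needing care is the localization in the first step, namely that a cycle cannot pass through two blocks. This holds because a cycle is 2-connected and therefore lies in a single block. Everything else is a direct computation.
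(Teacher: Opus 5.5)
Your proof is correct and follows the paper's argument. You sum per-block contributions of $\tfrac12$, $1$, and $\tfrac{r-1}{2}$ (the last via Proposition~\ref{prop:induced}) using $\sum_K(|V(K)|-1)=n-1$, which you obtain by leaf-block induction where the paper instead orders the blocks so each meets the earlier ones in one vertex, and you are slightly more explicit than the paper in justifying that $C_w(e)$ can be computed inside the block containing $e$.
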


\begin{proof}
We sum \eqref{eq:main} block-by-block.
If $e$ is a bridge, then $\frac{w(e)}{C_w(e)}=\frac{1}{2}$.

If $e$ lies in a triangle block $T$, then $T$ is the unique cycle containing $e$, so $C_w(e)=w(T)$ and
$$
\sum_{e\in E(T)} \frac{w(e)}{C_w(e)}=\frac{w(T)}{w(T)}=1=\frac{3-1}{2}.
$$

If $e$ lies in a clique block $K_r$ with $r\ge 4$ and the restriction of $w$ to this block is vertex-induced, then Proposition~\ref{prop:induced} shows that this block contributes $(r-1)/2$.

Since $G$ is a block graph, its clique blocks can be ordered $B_1,\dots,B_t$ so that for each $j\ge 2$ the block $B_j$ intersects the union of the previous blocks in exactly one vertex.
Consequently,
$$
n=|V(B_1)|+\sum_{j=2}^t \bigl(|V(B_j)|-1\bigr),
\qquad\text{so}\qquad
n-1=\sum_{j=1}^t \bigl(|V(B_j)|-1\bigr).
$$
Summing the block contributions computed above yields $\frac{n-1}{2}$.
\end{proof}

\subsection{Necessary conditions for equality}

\begin{proposition}\label{prop:nec}
Let $(G,w)$ be a connected weighted graph on $n$ vertices.
If equality holds in \eqref{eq:main}, then for each component $H$ of $G-B$ with at least one edge (where $B$ is the set of bridges) the auxiliary weighting $\phi(e)=\frac{w(e)}{C_w(e)}$ satisfies:

\item $({\rm i})$  $\phi(H)=\frac{|V(H)|-1}{2}$;

\item $({\rm ii})$ there exists a cycle $C\subseteq H$ with $\phi(C)=1$;

\item $({\rm iii})$ for every edge $e\in E(C)$ one has $C_w(e)=w(C)$.
\\

In particular, each bridgeless component is extremal for the Bondy--Fan heavy-cycle theorem under the weighting $\phi$.
\end{proposition}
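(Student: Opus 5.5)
The plan is to re-run the proof of Theorem~\ref{thm:main} and note that equality in \eqref{eq:main} forces every inequality used there to be tight. That proof gave
$$
\sum_{e\in E(G)}\phi(e)=\sum_{i=1}^k \phi(H_i)+\frac{|B|}{2},\qquad \phi(H_i)\le \frac{n_i-1}{2}\ \text{for each } i,
$$
and $\sum_i\frac{n_i-1}{2}+\frac{|B|}{2}=\frac{n-1}{2}$. The bridge contribution is exactly $|B|/2$, so any strict inequality $\phi(H_i)<(n_i-1)/2$ would make the total strictly smaller than $(n-1)/2$. Equality therefore gives $\phi(H)=\frac{|V(H)|-1}{2}$ for every component $H$ of $G-B$, and in particular for those with at least one edge. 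This is (i).

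For (ii), fix such an $H$. It has an edge and is a component of $G-B$, so it is $2$-edge-connected and contains a cycle. By Theorem~\ref{bf} applied to $(H,\phi)$, there is a cycle $C\subseteq H$ with
$$
\phi(C)\ \ge\ \frac{2\phi(H)}{|V(H)|-1}=1,
$$
where the equality uses (i). Combined with \eqref{eq:cyclephi}, which gives $\phi(C)\le 1$ for every cycle, this yields $\phi(C)=1$.

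For (iii), compare term by term with \eqref{eq:cyclephi}. We have
$$
1=\phi(C)=\sum_{e\in E(C)}\frac{w(e)}{C_w(e)}\le\sum_{e\in E(C)}\frac{w(e)}{w(C)}=1,
$$
and each summand satisfies $\frac{w(e)}{C_w(e)}\le\frac{w(e)}{w(C)}$, since $C_w(e)\ge w(C)$. Because $w(e)>0$, equality of the sums forces equality in every summand, so $C_w(e)=w(C)$ for all $e\in E(C)$.

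The final remark then follows. Every cycle has $\phi$-weight at most $1$, and $\frac{2\phi(H)}{|V(H)|-1}=1$, so $H$ contains no cycle of $\phi$-weight exceeding the Bondy--Fan bound, which means it is extremal for Theorem~\ref{bf}. The only delicate point is the step that pushes global equality down to each component in (i). It works because the bridges contribute exactly $|B|/2$ and the component bounds simply add, so no slack can be shifted between components.
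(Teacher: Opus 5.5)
Your proposal is correct and follows the paper's proof essentially step for step. Equality in \eqref{eq:main} forces each per-component bound $\phi(H)\le\frac{|V(H)|-1}{2}$ from the proof of Theorem~\ref{thm:main} to be tight; you usefully make explicit that the bridges contribute exactly $|B|/2$, so no slack can move between components. Theorem~\ref{bf} together with \eqref{eq:cyclephi} then yields a cycle with $\phi(C)=1$, and termwise equality with $w(e)>0$ gives $C_w(e)=w(C)$. Your justification of the final ``in particular'' claim, which the paper leaves implicit, is also sound.
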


\begin{proof}
In the proof of Theorem~\ref{thm:main} we obtained for each bridgeless component $H$ with $n_H=|V(H)|$ the bound $\phi(H)\le \frac{n_H-1}{2}$.
If equality holds in \eqref{eq:main}, then equality must hold for every such component, giving (i).

The Bondy--Fan theorem yields a cycle $C$ in $(H,\phi)$ with weight at least $\frac{2\phi(H)}{n_H-1}=1$; but every cycle has weight at most $1$ by \eqref{eq:cyclephi}, so $\phi(C)=1$, proving (ii).

For (iii), note that for any cycle $C$ we have $C_w(e)\ge w(C)$ for each $e\in E(C)$, hence
$$
\phi(C)=\sum_{e\in E(C)} \frac{w(e)}{C_w(e)} \le \sum_{e\in E(C)}\frac{w(e)}{w(C)} = 1.
$$
If $\phi(C)=1$, then equality must hold termwise.
Since all $w(e)>0$, this forces $C_w(e)=w(C)$ for each $e\in E(C)$.
\end{proof}

\subsection{Complete graphs}

On complete graphs one can say more: equality forces the \emph{self-normalized} weights $\phi(e)=\frac{w(e)}{C_w(e)}$ to be induced in the sense of Definition~\ref{def:induced}.

\begin{lemma}\label{lem:hamdecomp}
Let $r\ge 3$.
Let $\mathcal{H}$ be the set of undirected Hamilton cycles in $K_r$.
Then each edge of $K_r$ lies in exactly $(r-2)!$ members of $\mathcal{H}$.
Consequently,
\begin{equation}\label{eq:fractional-ham}
\mathbf{1}_{E(K_r)}=\sum_{H\in \mathcal{H}} \frac{1}{(r-2)!}\,\mathbf{1}_{E(H)}.
\end{equation}
\end{lemma}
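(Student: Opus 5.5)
The plan is a direct count followed by double counting.

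First I will count the undirected Hamilton cycles in $K_r$ for $r\ge 3$. A Hamilton cycle corresponds to a cyclic ordering of the $r$ vertices up to rotation and reflection. Rotation contributes a factor of $r$ and reversal a factor of $2$, and for $r\ge 3$ the reversal gives a genuinely different sequence. Hence
\[
|\mathcal{H}| = \frac{r!}{2r}=\frac{(r-1)!}{2}.
\]

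Next I will count the Hamilton cycles through a fixed edge $uv$. Such a cycle is the edge $uv$ together with a Hamilton path from $v$ back to $u$ through the remaining $r-2$ vertices. That path is determined by the ordering of these $r-2$ vertices, and different orderings give different cycles, since the orientation is fixed by requiring the path to start at $v$. This gives exactly $(r-2)!$ cycles. For $r=3$ the count is $1$, which is correct because $K_3$ has a unique triangle.

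As a cross-check I will double count edge–cycle incidences. The left side is $|\mathcal{H}|\cdot r=\frac{r!}{2}$. By symmetry of $K_r$, every edge lies in the same number $N$ of Hamilton cycles, so this must equal $N\binom{r}{2}$. Solving gives $N=\frac{r!}{2}\cdot\frac{2}{r(r-1)}=(r-2)!$, matching the direct count.

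Finally, I will derive \eqref{eq:fractional-ham} by evaluating both sides at an arbitrary edge $e$. The right side equals $\frac{1}{(r-2)!}\cdot|\{H\in\mathcal{H}: e\in E(H)\}|=1$, which agrees with the left side. The only point requiring care is the rotation and reflection bookkeeping in the counts. The direct argument via paths from $v$ to $u$ avoids this, so I expect no real obstacle.
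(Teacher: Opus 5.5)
Your proposal is correct. Your double-counting ``cross-check'' is exactly the paper's proof: count $\frac{(r-1)!}{2}$ Hamilton cycles, use symmetry to get a common value $N$, and solve $N\binom{r}{2}=r\cdot\frac{(r-1)!}{2}$.

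Your primary argument, the bijection between Hamilton cycles through $uv$ and orderings of the other $r-2$ vertices (oriented from $v$ away from $u$), is a slightly more self-contained alternative. It needs neither the edge-transitivity appeal nor the total count of Hamilton cycles.
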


\begin{proof}
The number of undirected Hamilton cycles in $K_r$ is $\frac{(r-1)!}{2}$.
By symmetry, each edge lies in the same number $N$ of Hamilton cycles.
Double counting edge--cycle incidences gives
$$
N\binom{r}{2}=r\cdot\frac{(r-1)!}{2},
$$
so $N=(r-2)!$.
Equation \eqref{eq:fractional-ham} is then immediate.
\end{proof}

\begin{lemma}\label{lem:ham-implies-induced}
Let $r\ge 4$ and let $\phi$ be a positive edge-weighting of $K_r$.
If every Hamilton cycle of $K_r$ has the same $\phi$-weight, then $\phi$ is induced: there exists $a:V(K_r)\to\mathbb{R}$ such that $\phi(uv)=\frac{a(u)+a(v)}{2}$ for all edges $uv$.
\end{lemma}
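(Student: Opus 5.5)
The plan is to use local exchange moves between Hamilton cycles. Each move will show that $\phi$ satisfies the ``four-point'' identity
\[
\phi(ab)+\phi(cd)=\phi(ac)+\phi(bd)
\]
for all distinct $a,b,c,d$. From this identity we then construct $a$ explicitly. The first step is the exchange. Fix four distinct vertices $a,b,c,d$. If $r\ge 5$, choose a Hamilton path $P$ on $V(K_r)\setminus\{a,b,c,d\}$ with endpoints $x,y$ (a single vertex $x=y$ if $r=5$). If $r=4$, the two cycles will simply be $abdc$ and $acdb$. Consider the Hamilton cycles
\[
H_1 = x\,P\,y\,a\,b\,d\,c\,x, \qquad H_2 = x\,P\,y\,a\,c\,d\,b\,x .
\]
The edge sets differ as follows. $H_1$ uses $ab,bd,dc,cx$, while $H_2$ uses $ac,cd,db,bx$. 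The common edges are $ya$, $bd=db$, $cd$ and $P$. Equal weights therefore give $\phi(ab)+\phi(cx)=\phi(ac)+\phi(bx)$.

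That identity still involves $x$, so I would instead use the cleaner standard 2-opt move. Take a Hamilton cycle containing the edges $ab$ and $cd$, traversed as $\dots a\,b \dots c\,d\dots$. Reversing the segment from $b$ to $c$ yields a Hamilton cycle with $ac,bd$ in place of $ab,cd$. Since $r\ge4$, such a cycle always exists: order the vertices $a,b,\ldots,c,d$ with the remaining vertices placed between $b$ and $c$. Equal weights give exactly the four-point identity.

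Next, define
\[
a(v):=\phi(vu)+\phi(vw)-\phi(uw)
\]
for a fixed reference pair. More symmetrically, fix a vertex $z$ and distinct vertices, and check that
\[
s(u,v):=\phi(uv)-\tfrac12\bigl(a(u)+a(v)\bigr)
\]
vanishes. Concretely, the four-point identity says that $\phi(ab)-\phi(ac)$ is independent of the vertex $a$ (for $a\ne b,c$). Write $g(b)-g(c)$ for this quantity, normalized by fixing $g(c_0)=0$; consistency follows from the identity applied with different choices of $a$. Then $\phi(ab)-g(b)$ depends only on $a$, for $b\neq a$. Call it $h(a)$, so $\phi(ab)=h(a)+g(b)$. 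Symmetry of $\phi$ gives $h(a)-g(a)=h(b)-g(b)$, a constant $\kappa$. Hence $\phi(ab)=g(a)+g(b)+\kappa$, and setting $a(v)=2g(v)+\kappa$ gives the claim.

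The main obstacle is the bookkeeping that $g$ is well defined. This means verifying that $\phi(ab)-\phi(ac)$ does not depend on $a$, including when only one auxiliary vertex is available; for $r=4$ there is exactly one choice. The identity $\phi(ab)+\phi(cd)=\phi(ac)+\phi(bd)$ with $a,d$ playing the roles of two different ``base'' vertices gives precisely $\phi(ab)-\phi(ac)=\phi(db)-\phi(dc)$, so independence holds for any two bases. I expect this to go through for all $r\ge4$; the $r=3$ case is excluded precisely because no such exchange exists. Note also that $a$ may take negative values, consistent with the statement allowing $a:V\to\mathbb{R}$.
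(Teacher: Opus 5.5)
Your proof is correct. Its first half matches the paper's; the construction of $a$ takes a different and somewhat cleaner route.

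\textbf{The four-point identity.} Like the paper, you apply a $2$-opt swap to a Hamilton cycle through $ab$ and $cd$ to get $\phi(ab)+\phi(cd)=\phi(ac)+\phi(bd)$. The paper states this only relative to a fixed pair $x,y$. You state it for all distinct quadruples.

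\textbf{Constructing $a$.} The paper fixes $x,y$, sets $a(t)=\phi(tx)+\phi(ty)-\phi(xy)$, and checks $a(s)+a(t)=2\phi(st)$ by applying \eqref{eq:4cycle} twice. That is shorter, but as written it only covers $s,t\notin\{x,y\}$. For $t=x$ the formula contains the undefined $\phi(xx)$, and \eqref{eq:4cycle} is only proved for $u,v\notin\{x,y\}$. One must add a definition such as $a(x):=\phi(xs)-\phi(ys)+\phi(xy)$ for any $s\notin\{x,y\}$ (independent of $s$ by \eqref{eq:4cycle}), define $a(y)$ symmetrically, and check the pairs involving $x$ or $y$.

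Your route avoids this. You use base-independence of $\phi(ab)-\phi(ac)$ to build a potential $g$, split $\phi(ab)=h(a)+g(b)$, and use symmetry of $\phi$ to force $h-g\equiv\kappa$. This treats every vertex the same way, so no exceptional vertices arise.

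\textbf{Step to make explicit.} Spell out why $g$ is well defined. Put $D(b,c):=\phi(ab)-\phi(ac)$ for any $a\notin\{b,c\}$. You need $D(b,c)+D(c,e)=D(b,e)$ for distinct $b,c,e$. This follows at once by evaluating all three terms with a single base vertex outside $\{b,c,e\}$, which is the second place $r\ge 4$ is used. Your phrase ``different choices of $a$'' does not quite say this.

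\textbf{Cleanup for a write-up.}
\begin{enumerate}[(1)]
\item Delete the abandoned first exchange. For $r=4$, $abdc$ and $acdb$ are the same cycle traversed in opposite directions, so it yields nothing there.
\item Delete the unfinished sentences about $\phi(vu)+\phi(vw)-\phi(uw)$ and $s(u,v)$.
\item Stop using $a$ both for a vertex and for the vertex-weight function.
\item Your closing remark on $r=3$ needs adjusting. The hypothesis $r\ge4$ is needed by the exchange method, not by the conclusion, since every weighting of a triangle is trivially induced.
\end{enumerate}
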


\begin{proof}
Fix distinct vertices $x,y$.
We claim that for any two distinct vertices $u,v\notin\{x,y\}$,
\begin{equation}\label{eq:4cycle}
\phi(uv)+\phi(xy)=\phi(ux)+\phi(vy)=\phi(uy)+\phi(vx).
\end{equation}
Indeed, choose a Hamilton cycle $H$ that contains the edges $ux$ and $vy$ and in which the four vertices $u,x,v,y$ appear in the cyclic order $u-x-\cdots-v-y-\cdots-u$.
Let $H'$ be obtained from $H$ by the standard $2$-opt swap replacing the edges $ux$ and $vy$ by $uv$ and $xy$ (this is possible in $K_r$ and yields another Hamilton cycle).
The two Hamilton cycles share all other edges, so the assumption that $\phi(H)=\phi(H')$ implies $\phi(ux)+\phi(vy)=\phi(uv)+\phi(xy)$.
The second equality in \eqref{eq:4cycle} follows analogously (swap $uy$ and $vx$).

Now define $a:V(K_r)\to \mathbb{R}$ by
$$
a(t):=\phi(tx)+\phi(ty)-\phi(xy)\qquad (t\in V(K_r)).
$$
For distinct $s,t$, using \eqref{eq:4cycle} twice we obtain
$$
a(s)+a(t)
=\bigl(\phi(sx)+\phi(sy)-\phi(xy)\bigr)+\bigl(\phi(tx)+\phi(ty)-\phi(xy)\bigr)
=2\phi(st),
$$
so $\phi(st)=\frac{a(s)+a(t)}{2}$ as required.
\end{proof}

\begin{proposition}\label{prop:Kcomplete}
Let $r\ge 4$ and let $(K_r,w)$ be a positively weighted complete graph.
If equality holds in \eqref{eq:main}, then the self-normalized weights
$$
\phi(e)=\frac{w(e)}{C_w(e)}
$$
form an induced edge-weighting of $K_r$.
Equivalently, there exists $a:V(K_r)\to\mathbb{R}$ such that $\phi(uv)=\frac{a(u)+a(v)}{2}$ for all $uv$.
\end{proposition}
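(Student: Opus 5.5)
The plan is to show that every Hamilton cycle of $K_r$ has $\phi$-weight exactly $1$, and then to invoke Lemma~\ref{lem:ham-implies-induced}. Since $K_r$ is $2$-connected for $r\ge 4$, it has no bridges. Hence $G-B=K_r$, and equality in \eqref{eq:main} reads $\phi(K_r)=\frac{r-1}{2}$. We may assume this directly, or take it from Proposition~\ref{prop:nec}(i).

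The key step is an averaging argument using Lemma~\ref{lem:hamdecomp}. Apply \eqref{eq:fractional-ham} to $\phi$, that is, pair both sides with $\phi$ to obtain
$$
\phi(K_r)=\frac{1}{(r-2)!}\sum_{H\in\mathcal{H}}\phi(H).
$$
By \eqref{eq:cyclephi}, every cycle, and in particular every Hamilton cycle, satisfies $\phi(H)\le 1$. The family $\mathcal{H}$ has $|\mathcal{H}|=\frac{(r-1)!}{2}$ members. The right-hand side is therefore at most
$$
\frac{(r-1)!}{2(r-2)!}=\frac{r-1}{2}.
$$
By the equality assumption the two sides coincide, so every summand is forced to attain its upper bound: $\phi(H)=1$ for every $H\in\mathcal{H}$.

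All Hamilton cycles therefore have the same $\phi$-weight. Since $\phi$ is positive and $r\ge 4$, Lemma~\ref{lem:ham-implies-induced} yields $a:V(K_r)\to\mathbb{R}$ with $\phi(uv)=\frac{a(u)+a(v)}{2}$, which is the claim.

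The only delicate point is the tightness step: we must check that the sum bound is an equality only when each term is maximal. This holds because every term is individually bounded by $1$, so there is no real obstacle. Note also that the statement allows $a$ to be real-valued rather than nonnegative, which matches the output of Lemma~\ref{lem:ham-implies-induced}, so no sign check on $a$ is needed.
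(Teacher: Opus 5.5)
Your proposal is correct and follows the paper's proof essentially step for step. You average $\phi$ over all Hamilton cycles using Lemma~\ref{lem:hamdecomp}, use the cycle bound \eqref{eq:cyclephi} together with equality to force $\phi(H)=1$ for every Hamilton cycle, and then invoke Lemma~\ref{lem:ham-implies-induced}.
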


\begin{proof}
Let $\phi(e)=\frac{w(e)}{C_w(e)}$.
As in \eqref{eq:cyclephi}, every cycle $C$ satisfies $\phi(C)\le 1$.
Since equality holds in \eqref{eq:main}, we have $\phi(K_r)=\frac{r-1}{2}$.

Apply \eqref{eq:fractional-ham}.
Multiplying by $\phi$ and summing gives
$$
\phi(K_r)
=\sum_{H\in \mathcal{H}}\frac{1}{(r-2)!}\,\phi(H)
\le \sum_{H\in \mathcal{H}}\frac{1}{(r-2)!}
=\frac{|\mathcal{H}|}{(r-2)!}
=\frac{r-1}{2}.
$$
All inequalities must be equalities, so $\phi(H)=1$ for every Hamilton cycle $H$.
In particular, all Hamilton cycles have the same $\phi$-weight, and Lemma~\ref{lem:ham-implies-induced} yields that $\phi$ is induced.
\end{proof}

\begin{lemma}\label{lem:ham-connected}
Let $r\ge 4$ and let $\mathcal{H}$ be the set of undirected Hamilton cycles of $K_r$.
For any $H,H'\in\mathcal{H}$ there exists a sequence $H=H_0,H_1,\dots,H_t=H'$ such that $H_{i-1}$ and $H_i$ share at least one edge for every $i$.
\end{lemma}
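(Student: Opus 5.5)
The plan is to connect every Hamilton cycle to one fixed edge. The connectivity relation in Lemma~\ref{lem:ham-connected} only asks that consecutive cycles share at least one edge. So the most direct route is to fix an edge $f=xy$ of $K_r$ and show that every Hamilton cycle $H$ can be joined to some Hamilton cycle containing $f$. Any two Hamilton cycles that both contain $f$ are then adjacent to each other directly. Concatenating gives a sequence $H=H_0,H_1,H_2,H_3=H'$ of length at most three.

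The key steps, in order, are as follows.
\begin{enumerate}[(1)]
\item If $H$ already contains $f$, set $H_1=H$ and there is nothing to do.
\item Otherwise, pick any edge $g=uv\in E(H)$ and build a Hamilton cycle $\tilde H$ of $K_r$ that contains both $g$ and $f$. Then $H$ and $\tilde H$ share $g$, and $\tilde H$ contains $f$.
\item Do the same for $H'$, obtaining a Hamilton cycle $\tilde H'$ through $f$.
\item Since $\tilde H$ and $\tilde H'$ share $f$, the sequence $H,\tilde H,\tilde H',H'$ works, after deleting repeated terms when $H$ or $H'$ already contains $f$.
\end{enumerate}

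The only real content is the construction in step (2): every pair of edges $g,f$ of $K_r$ with $r\ge 4$ lies on a common Hamilton cycle.

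\emph{Case 1: $g$ and $f$ are disjoint}, say $g=uv$ and $f=xy$ with all four vertices distinct. Take the cycle $u,v,x,y$, followed by the remaining $r-4$ vertices in any order, and close back to $u$. All required edges exist in $K_r$.

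\emph{Case 2: $g$ and $f$ share a vertex}, say $g=uv$ and $f=vy$. Take the path $u,v,y$, follow it with the remaining $r-3\ge 1$ vertices, and return to $u$. Here $r\ge 4$ ensures that $u$ and $y$ are not joined directly, so the closing edge is not a repeat. In fact $r\ge 3$ already suffices for this case.

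\emph{Case 3: $g=f$.} This is trivial.

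I do not expect a genuine obstacle. The one point to check carefully is that each constructed sequence is a simple cycle on all $r$ vertices. Under the stated hypothesis $r\ge4$ this holds, and all needed edges exist because the graph is complete.
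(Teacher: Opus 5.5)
Your proof is correct, but it takes a different route from the paper.

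\textbf{The paper's route.} It encodes Hamilton cycles as vertex orderings and uses the fact that $S_r$ is generated by adjacent transpositions. It then walks from $H$ to $H'$ one transposition at a time. Each step swaps two consecutive vertices of the cyclic order, so successive cycles share an edge. The edge joining the two swapped vertices always survives; that is the real reason each step works. This matters especially for $r=4$, where the paper's phrase about an edge ``outside the local neighborhood'' is loose. The approach rests on a standard group-theoretic fact, but it produces sequences whose length can be quadratic in $r$.

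\textbf{Your route.} It is purely constructive. Its only ingredient is that any two distinct edges of $K_r$ with $r\ge 4$ lie on a common Hamilton cycle, which you check directly in the disjoint and adjacent cases. It gives the explicit bound $t\le 3$. The hub edge $f$ is in fact unnecessary: choose $g\in E(H)$ and $g'\in E(H')$ and take a Hamilton cycle through both, which gives $t\le 2$ at once.

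A minor point: your remark in Case 2 that $r\ge 4$ keeps $u$ and $y$ from being joined directly is beside the point. For $r=3$ the closing edge $yu$ is also new. The remark does no harm to the argument.
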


\begin{proof}
Let the vertices of $K_{r}$ be $1,\dots,r$. Every Hamilton cycle can be identified with a cyclic permutation of the vertex set. It is known that the symmetric group $S_{r}$ is generated by all adjacent transpositions $(i,i+1)$ (where $i = 1,\dots,r-1$). Consequently, for the cycles $H$ and $H^{\prime}$ corresponding to the cyclic permutations $\pi$ and $\pi^{\prime}$ respectively, there exists a sequence of adjacent transpositions $s_{1},s_{2},\dots,s_{k}$ such that $\pi^{\prime} = \pi \circ s_{1} \circ s_{2} \circ \dots \circ s_{k}$.

We now define a sequence of cycles $H_{0},H_{1},\dots,H_{k}$ as follows: set $H_{0} = H$, and let $H_{i}$ be the cycle corresponding to the permutation $\pi \circ s_{1} \circ \dots \circ s_{i}$, in other words, $H_{i}$ is obtained from $H_{i-1}$ by applying the adjacent transposition $s_{i}$. Applying an adjacent transposition $(j,j+1)$ corresponds to swapping the two adjacent vertices $j$ and $j+1$ in the cyclic order of the cycle.

Consider any consecutive pair $H_{i-1}$ and $H_{i}$ in this sequence. They are related by a single 2-opt swap. Under such a swap, only four edges those incident to the two swapped vertices and their immediate neighbors can change; all other edges of the cycle remain unchanged. Since $r \ge 4$, the cycle has at least four vertices, and therefore $H_{i-1}$ and $H_{i}$ must share at least one edge outside the local neighborhood affected by the swap.

Thus the constructed sequence $H = H_{0}, H_{1}, \ldots, H_{k} = H^{\prime}$ satisfies the required property.
\end{proof}

\begin{proposition}\label{prop:Kchar}
Let $r\ge 4$ and let $(K_r,w)$ be a positively weighted complete graph.
Then equality holds in \eqref{eq:main} if and only if $w$ is vertex-induced in the sense of Definition~\ref{def:induced}.
In this case, every edge has the same localization denominator, namely
$$
C_w(e)=\sum_{v\in V(K_r)} a(v)\qquad\text{for all }e\in E(K_r).
$$
\end{proposition}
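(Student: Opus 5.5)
The "if" direction is exactly Proposition~\ref{prop:induced}. Its proof also shows that $C_w(e)=A=\sum_v a(v)$ for every edge, since every cycle has weight $\sum_{v\in V(C)}a(v)\le A$ and every edge lies on a Hamilton cycle. So the real work is the "only if" direction, together with checking that the formula for $C_w(e)$ holds in that case too.

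For "only if", assume equality in \eqref{eq:main}. The proof of Proposition~\ref{prop:Kcomplete} gives $\phi(H)=1$ for every Hamilton cycle $H$ of $K_r$. Now fix a Hamilton cycle $H$. By \eqref{eq:cyclephi} and the termwise-equality argument from part (iii) of Proposition~\ref{prop:nec}, $\phi(H)=1$ forces $C_w(e)=w(H)$ for every $e\in E(H)$. Thus all edges of $H$ share the common denominator $w(H)$.

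Next, apply Lemma~\ref{lem:ham-connected}. Consecutive Hamilton cycles $H_{i-1},H_i$ share an edge $e$, so $w(H_{i-1})=C_w(e)=w(H_i)$. Hence all Hamilton cycles have the same $w$-weight $M$. Since every edge lies on some Hamilton cycle (Lemma~\ref{lem:hamdecomp}), this gives $C_w(e)=M$ for all $e$, and therefore $w=M\phi$. (Alternatively, one could skip the connectivity lemma: the set of Hamilton cycles through a fixed edge $e$ all have weight $C_w(e)$, so edges sharing a Hamilton cycle share a denominator, and any two edges lie on a common Hamilton cycle when $r\ge4$. This gives the same conclusion directly.)

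By Proposition~\ref{prop:Kcomplete}, $\phi(uv)=\frac{a'(u)+a'(v)}{2}$ for some $a'$, so $w(uv)=\frac{a(u)+a(v)}{2}$ with $a=Ma'$. It remains to check that $a\ge 0$, since Definition~\ref{def:induced} requires nonnegative vertex weights. This is the main obstacle, because Lemma~\ref{lem:ham-implies-induced} only produces a real-valued $a$. The plan is to use the maximality of Hamilton cycles. The weight $w(C)=\sum_{v\in V(C)}a(v)$ holds for any cycle. If $a(v)<0$ for some $v$, take a Hamilton cycle $H$ and shortcut $v$ by replacing its two neighbouring edges with the chord joining its neighbours; this is valid for $r\ge4$. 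The resulting cycle $C$ contains an edge $e$ of $H$ not incident to $v$, and $w(C)=M-a(v)>M=C_w(e)$, a contradiction. Hence $a\ge0$, so $w$ is vertex-induced, and $C_w(e)=M=\sum_v a(v)$ for all $e$, which is the stated formula.
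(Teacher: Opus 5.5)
Your proposal is correct and follows the paper's proof essentially step for step: $\phi(H)=1$ on every Hamilton cycle, termwise equality gives $C_w(e)=w(H)$, Lemma~\ref{lem:ham-connected} makes all Hamilton-cycle weights equal, hence $w=M\phi$, and nonnegativity of $a$ comes from a cycle on $V(K_r)\setminus\{v\}$ of weight $M-a(v)>M$. Your parenthetical alternative is also valid and slightly cleaner than invoking Lemma~\ref{lem:ham-connected}: any two edges of $K_r$ lie on a common Hamilton cycle, so all the $C_w(e)$ coincide directly.
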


\begin{proof}
If $w$ is vertex-induced, then Proposition~\ref{prop:induced} gives equality.

Conversely, assume equality holds in \eqref{eq:main}.
Set $\phi(e)=\frac{w(e)}{C_w(e)}$.
By Proposition~\ref{prop:Kcomplete}, $\phi$ is induced.
Moreover, the proof of Proposition~\ref{prop:Kcomplete} shows that every Hamilton cycle $H$ satisfies $\phi(H)=1$.

Fix a Hamilton cycle $H$.
Since $C_w(e)\ge w(H)$ for all $e\in E(H)$, we have
$$
1=\phi(H)=\sum_{e\in E(H)}\frac{w(e)}{C_w(e)}\le \sum_{e\in E(H)}\frac{w(e)}{w(H)}=1.
$$
Thus equality holds, forcing $C_w(e)=w(H)$ for every $e\in E(H)$.

If two Hamilton cycles share an edge $e$, then the above shows their total $w$-weights are both equal to $C_w(e)$, hence are equal.
By Lemma~\ref{lem:ham-connected}, all Hamilton cycles have the same total weight; denote it by $W$.
Since every edge of $K_r$ lies on a Hamilton cycle, it follows that $C_w(e)=W$ for all edges $e$.
Therefore $w(e)=W\,\phi(e)$ for all $e$.
Since $\phi$ is induced, so is $w$.

Finally, let $a:V(K_r)\to\mathbb{R}$ be vertex weights inducing $w$.
For any cycle $C$ in $K_r$ we have $w(C)=\sum_{v\in V(C)}a(v)$.
If some vertex $x$ had $a(x)<0$, then a cycle on $V(K_r)\setminus\{x\}$ would have weight
$\sum_{v\ne x}a(v)=\sum_{v}a(v)-a(x)>\sum_{v}a(v)$, contradicting the fact that $W$ is the maximum cycle weight.
Hence $a(v)\ge 0$ for all $v$, completing the proof.
\end{proof}

\begin{corollary}\label{cor:block-char}
Let $G$ be a block graph on $n$ vertices.
If equality holds in \eqref{eq:main}, then every clique block $K_r$ with $r\ge 4$ carries a vertex-induced weighting (triangle blocks may have arbitrary positive weights).
\end{corollary}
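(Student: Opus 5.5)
The plan is to reduce the corollary to Proposition~\ref{prop:Kchar} by showing that equality in \eqref{eq:main} for $G$ forces equality blockwise. In a block graph every cycle lies in a unique clique block. Hence for an edge $e$ in a block $K$ that is not a bridge, every cycle through $e$ lies inside $K$. Therefore $C_w(e)$ computed in $G$ coincides with $C_{w|_K}(e)$ computed in the weighted graph $(K,w|_K)$. Bridges are exactly the blocks $K_2$, and each contributes $\tfrac12=\tfrac{2-1}{2}$.

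The first step is to write the left side of \eqref{eq:main} as a sum over blocks $B_1,\dots,B_t$:
\[
\sum_{e\in E(G)}\frac{w(e)}{C_w(e)}=\sum_{j=1}^t\sum_{e\in E(B_j)}\frac{w(e)}{C_{w|_{B_j}}(e)}.
\]
Each block $(B_j,w|_{B_j})$ is a connected weighted graph on $|V(B_j)|$ vertices, so Theorem~\ref{thm:main} bounds its inner sum by $\frac{|V(B_j)|-1}{2}$.

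Next I use the identity $n-1=\sum_j(|V(B_j)|-1)$, established in the proof of Proposition~\ref{prop:block}. Since the right-hand sides add up to exactly $\frac{n-1}{2}$, equality for $G$ forces equality in every block inequality.

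Finally, for each block $B_j\cong K_r$ with $r\ge 4$, equality holds for $(K_r,w|_{B_j})$. Proposition~\ref{prop:Kchar} then gives that $w|_{B_j}$ is vertex-induced, with nonnegative vertex weights. Triangle blocks and bridges impose no condition, which matches the parenthetical remark in the statement.

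The only delicate point is the localization claim that $C_w(e)$ equals the block-restricted value. It follows because any cycle through $e$ is $2$-connected, so it is contained in a single block, namely the block containing $e$. With that in hand, the remaining steps are bookkeeping.
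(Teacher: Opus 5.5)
Your proposal is correct and follows the same route as the paper. Both arguments localize $C_w(e)$ to the block containing $e$, apply Theorem~\ref{thm:main} to each block, use $n-1=\sum_{B}(|V(B)|-1)$ to force equality in every block, and finish with Proposition~\ref{prop:Kchar}; your explicit handling of bridge and triangle blocks in the summation is slightly more careful than the paper's write-up, which states the localization only for blocks with $r\ge 4$.
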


\begin{proof}
Let $B$ be a clique block of size $r\ge 4$.
Every cycle containing an edge of $B$ is contained in $B$, so the values $C_w(e)$ for $e\in E(B)$ are computed entirely inside $B$.
Apply Theorem~\ref{thm:main} to $(B,w|_{E(B)})$ to get
$$
\sum_{e\in E(B)}\frac{w(e)}{C_w(e)}\le \frac{|V(B)|-1}{2}.
$$
Summing this bound over all blocks of $G$ yields exactly the global bound $\frac{n-1}{2}$.
Indeed, for block graphs one has
$$
n-1=\sum_{\text{blocks }B} (|V(B)|-1),
$$
as in the proof of Proposition~\ref{prop:block}.
Thus if equality holds in \eqref{eq:main} for $G$, then equality must hold in the displayed inequality for every block $B$.
Proposition~\ref{prop:Kchar} then implies that $w|_{E(B)}$ is vertex-induced.
\end{proof}

\begin{remark}[On a full equality characterization]
Bondy and Fan~\cite{BF} completely characterize the $2$-edge-connected weighted graphs for which their heavy-cycle bound is tight.
Proposition~\ref{prop:nec} shows that an equality characterization for \eqref{eq:main} can in principle be obtained by applying that characterization to the self-normalized weighting $\phi$ and keeping track of the ``tight cycle'' condition~(iii).
Proposition~\ref{prop:Kchar} gives a clean instance of this program: on complete graphs, the original weights themselves must be vertex-induced.
\end{remark}

\section{Concluding remarks}
In this note, we have established a weighted local analogue of the Bondy–Fan theorem, and shown that the resulting inequality is sharp by identifying a broad family of extremal graphs. 
The proof relies on a simple but effective self-normalization device: replacing $w$ by the auxiliary weighting $\phi(e)=\frac{w(e)}{C_w(e)}$ forces every cycle to carry $\phi$-weight at most $1$, after which the Bondy–Fan theorem applies directly. This reduction highlights a clean correspondence between the global and local perspectives on long cycles in weighted graphs.

The inequality in Theorem~\ref{thm:main} yields several useful consequences when one imposes a threshold on the value $C_w(e)$. Edges with small $C_w(e)$, those lying only on relatively light cycle, cannot collectively carry too much weight, as made precisely by the following estimates.

\begin{corollary}\label{cor:threshold}
Let $(G,w)$ be a connected weighted graph on $n$ vertices.
For any $T>0$,
\begin{equation}\label{eq:threshold}
\sum_{e\in E(G):\, C_w(e)\le T} w(e)\ \le\ \frac{T(n-1)}{2}.
\end{equation}
\end{corollary}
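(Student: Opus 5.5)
The plan is to derive Corollary~\ref{cor:threshold} directly from Theorem~\ref{thm:main} by a termwise comparison. The idea is that for each edge with $C_w(e)\le T$, the summand $w(e)/C_w(e)$ is at least $w(e)/T$. Every summand in \eqref{eq:main} is positive, so we may discard the edges with $C_w(e)>T$ without increasing the left-hand side. Let $E_T=\{e\in E(G): C_w(e)\le T\}$. Since $w(e)>0$ and $C_w(e)>0$ for every edge (it is either a cycle weight or $2w(e)$), every term is positive and we get
\[
\frac{1}{T}\sum_{e\in E_T} w(e)\;\le\;\sum_{e\in E_T}\frac{w(e)}{C_w(e)}\;\le\;\sum_{e\in E(G)}\frac{w(e)}{C_w(e)}\;\le\;\frac{n-1}{2}.
\]

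The key steps, in order, are as follows.
\begin{enumerate}[(1)]
\item Note positivity of all $C_w(e)$. This holds because $w$ is positive and every cycle has positive weight; for a bridge, $C_w(e)=2w(e)>0$.
\item Use the pointwise bound $1/C_w(e)\ge 1/T$ on $E_T$.
\item Drop the remaining nonnegative terms.
\item Invoke \eqref{eq:main}.
\item Multiply through by $T>0$ to obtain \eqref{eq:threshold}.
\end{enumerate}

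There is no real obstacle here; the argument is a Markov-type averaging. The only points to check are that no denominator vanishes and that $T>0$, so multiplying the inequality by $T$ preserves its direction. If $E_T$ is empty the left side of \eqref{eq:threshold} is $0$ and the bound is trivial.
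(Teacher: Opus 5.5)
Your proposal is correct and is essentially the paper's argument: bound each summand with $C_w(e)\le T$ below by $w(e)/T$, discard the remaining positive terms, apply Theorem~\ref{thm:main}, and multiply by $T$. Your explicit check that every $C_w(e)>0$ is a detail the paper leaves implicit.
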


\begin{proof}
For edges with $C_w(e)\le T$ we have $w(e)/C_w(e)\ge w(e)/T$.
Summing and applying \eqref{eq:main} gives
$\frac1T\sum_{C_w(e)\le T} w(e)\le \frac{n-1}{2}$.
\end{proof}

\begin{corollary}\label{cor:sup}
Let $(G,w)$ be a connected weighted graph on $n$ vertices.
For any $T>0$,
$$
\sum_{e:\,C_w(e)>T} w(e)
\ \ge\ w(G)-\frac{T(n-1)}{2}.
$$
\end{corollary}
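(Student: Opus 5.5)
The plan is to obtain Corollary~\ref{cor:sup} as the complementary form of Corollary~\ref{cor:threshold}, with no new graph-theoretic input. I would split the edge set according to the threshold:
$$
E_{\le T}:=\{e\in E(G):\ C_w(e)\le T\},\qquad E_{>T}:=\{e\in E(G):\ C_w(e)>T\}.
$$
These two sets partition $E(G)$, since $C_w(e)$ is a well-defined positive real for every edge. This holds whether or not $e$ is a bridge: for a bridge we have $C_w(e)=2w(e)>0$.

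Since the sets partition $E(G)$, the total weight splits as
$$
w(G)=\sum_{e\in E_{\le T}}w(e)+\sum_{e\in E_{>T}}w(e).
$$
Corollary~\ref{cor:threshold} bounds the first sum by $\frac{T(n-1)}{2}$. Rearranging then gives exactly
$$
\sum_{e:\,C_w(e)>T}w(e)\ \ge\ w(G)-\frac{T(n-1)}{2}.
$$
Only the hypotheses of Corollary~\ref{cor:threshold} are needed, namely that $G$ is connected and $T>0$, and both are assumed in the statement.

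There is no genuine obstacle here. The only point to check is that the partition is exhaustive, i.e.\ that bridges are correctly included through the convention $C_w(e)=2w(e)$. The proof is otherwise a one-line rearrangement of Corollary~\ref{cor:threshold}, which in turn rests on Theorem~\ref{thm:main}.
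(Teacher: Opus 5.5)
Your proposal is correct and matches the paper's proof: both write $w(G)$ as the sum over edges with $C_w(e)\le T$ plus the sum over edges with $C_w(e)>T$, bound the first sum by \eqref{eq:threshold}, and rearrange. Your check that bridges are covered by the convention $C_w(e)=2w(e)$, so the two sets partition $E(G)$, is the only detail, and you handle it correctly.
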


\begin{proof}
This is immediate from \eqref{eq:threshold} by subtracting from $w(G)=\sum_{e\in E(G)}w(e)$.
\end{proof}

\begin{corollary}\label{cor:forest}
Let $(G,w)$ be a connected weighted graph.
Define
$$
F:=\{e\in E(G):\ C_w(e)<2w(e)\}.
$$
Then $F$ is acyclic (in particular $|F|\le n-1$).
\end{corollary}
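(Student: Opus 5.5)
The plan is to argue by contradiction, using the cycle bound \eqref{eq:cyclephi} from the proof of Theorem~\ref{thm:main} applied edge by edge. The key observation is this. If $e$ lies on a cycle $C$, then $C_w(e)\ge w(C)>w(e)$, because $C$ has at least three edges with positive weights. If $e$ is a bridge, then $C_w(e)=2w(e)$ by definition. So the condition $C_w(e)<2w(e)$ says exactly that $\phi(e)=w(e)/C_w(e)>\tfrac12$. In particular, $F$ contains no bridges, and every edge of $F$ carries more than half of the weight of every cycle through it.

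Now suppose $F$ contains a cycle $C$. Every edge $e\in E(C)$ lies in $F$, so $\phi(e)>\tfrac12$. Since $C$ has length at least $3$,
$$
\phi(C)=\sum_{e\in E(C)}\phi(e)>\frac{|E(C)|}{2}\ge \frac32>1,
$$
which contradicts \eqref{eq:cyclephi}. Hence $F$ is acyclic.

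A more direct version of the same argument avoids $\phi$ altogether. Pick two distinct edges $e,f\in E(C)$. Then $C_w(e)\ge w(C)\ge w(e)+w(f)$, so $2w(e)>C_w(e)$ forces $w(e)>w(f)$. By symmetry $w(f)>w(e)$, which is impossible.

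Finally, $F$ is an acyclic edge set on $n$ vertices, so it is a forest and $|F|\le n-1$. I do not expect any real obstacle here. The only points needing care are excluding bridges, which the strict inequality $C_w(e)<2w(e)$ handles, and noting that cycles in a simple graph have at least three edges.
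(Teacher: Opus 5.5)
Your proposal is correct, and your ``more direct version'' is exactly the paper's proof: for two distinct edges $e,f\in E(C)$ one has $w(e)+w(f)\le w(C)\le C_w(e)<2w(e)$, and symmetrically with $e$ and $f$ swapped, which is a contradiction. Your $\phi$-version is the same inequality $C_w(e)\ge w(C)$ summed over the cycle through \eqref{eq:cyclephi}, and both versions use only that two edges of $C$ lie in $F$, so they in fact show that no cycle of $G$ contains two edges of $F$.
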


\begin{proof}
If $F$ contained a cycle $C$, then for any $e\in E(C)$ we would have $w(C)\le C_w(e)<2w(e)$.
Choosing two distinct edges $e,f\in E(C)$ and using $w(C)\ge w(e)+w(f)$ yields $w(f)<w(e)$ and $w(e)<w(f)$, a contradiction.
\end{proof}

Several natural questions remain open. 
First, a complete characterization of equlity in Theorem~\ref{thm:main} for general weighted graphs, analogous to the characterization of Bondy and Fan, has not yet been obtained. Proposition~\ref{prop:nec} reduces this problem to understanding when the Bondy-Fan bound is tight under the self-normalized wighting $\phi$, but a full combinatorial description is still missing. Second, it would be interesting to explore whether similar self-normalization arguments can be applied to other extremal parameters, such as the length of the longest path or the size of a maximum matching, thereby yielding further localized inequalities in the spirit of~\cite{LN,ZZ}.

\end{document}